\documentclass[11pt]{article}
\usepackage{amsfonts,amssymb,amscd,amsmath,enumerate,verbatim,calc, times,url}
\usepackage{amsthm, psfrag,latexsym,epsfig,mdwlist,graphicx}

\textwidth6in
\textheight8in
\topmargin-.5in
\evensidemargin0in
\oddsidemargin0in

\theoremstyle{plain}
\newtheorem{theorem}{Theorem}[section]
\newtheorem{lemma}[theorem]{Lemma}
\newtheorem{proposition}[theorem]{Proposition}

\theoremstyle{definition}
\newtheorem{definition}[theorem]{Definition}

\newtheorem{remark}[theorem]{Remark}

\newtheorem{question}[theorem]{Question}

\newcommand{\xs}{x_1,\ldots,x_n}                
\newcommand{\Fs}{F_1,\ldots,F_q}                

\newcommand{\m}{\mathbf{m}}                 
\newcommand{\dimn}{\operatorname{dim}}
\newcommand{\rmv}[1]{\setminus \langle #1\rangle}
\newcommand{\F}{{\mathcal{F}}}                  
       
\newcommand{\D}{\Delta}                         
\newcommand{\lcm}{{\mathop{\rm{lcm}}}}          
              
\newcommand{\st}{\ | \ }                        
\newcommand{\tuple}[1]{\langle #1 \rangle}      
\newcommand{\void}[1]{}

\newcommand{\erase}[1]{}

\newcommand{\LCM}{\mbox{LCM}} 
\newcommand{\facets}{\mbox{Facets}} 
\newcommand{\NN}{\mathbb{N}} 
\newcommand{\fbar}{\overline{F}}

\newcommand{\cocoa}{\mbox{\rm C\kern-.13em o\kern-.07 em C\kern-.13em o\kern-.15em A}} 
\newcommand{\cocoax}{\mbox{C\kern-.13em o\kern-.07 em C\kern-.13em o\kern-.15em A}} 
\newcommand{\cocoal}{\mbox{\rm C\kern-.13em o\kern-.07 em C\kern-.13em o\kern-.15emA\kern-.1em L}}

\newcommand{\idiot}[1]{\vspace{5 mm}\par \noindent
\framebox{\begin{minipage}[c]{0.95 \textwidth}
\tt #1 \end{minipage}}\vspace{5 mm}\par}

\renewcommand{\idiot}[1]{}


\newcommand{\sm}{\setminus}


\renewcommand{\leq}{\leqslant}          
\renewcommand{\geq}{\geqslant}  

\date{}

\author{Sara Faridi\thanks{Department of
Mathematics and Statistics, Dalhousie University, Halifax, Canada, 
faridi@mathstat.dal.ca. 
Research supported by NSERC.}}

\title{\Large \sc Lattice Complements and the
  Subadditivity of Syzygies of Simplicial Forests}
\begin{document}

\maketitle
\begin{abstract}

We prove the subadditivity property for the maximal degrees of the
syzygies of facet ideals simplicial forests.  For such an ideal $I$,
if the $i$-th Betti number is nonzero and $i=a+b$, we show that there
are monomials in the lcm lattice of $I$ that are complements in part
of the lattice, each supporting a nonvanishing $a$-th and 
$b$-th Betti numbers. The subadditivity formula follows from this
observation.

 \end{abstract}

\section{Introduction} 

Let $=k[\xs]$, where $k$ is a field, and let $I$ be a graded ideal of
$S$, and suppose $S/I$  has  minimal graded free resolution
$$ 0 \to \oplus_{j \in \NN} S(-j)^{\beta_{p,j}}\to \oplus_{j \in \NN}
S(-j)^{\beta_{p-1,j}} \to \cdots \to \oplus_{j \in \NN}
S(-j)^{\beta_{1,j}} \to S$$ where the graded Betti numbers
$\beta_{i,j}(S/I)$ denote the rank of the degree $j$ component $S(-j)$
appearing in the $i$th homological degree in this sequence.

For an integer $i$, define
$$t_a(I)= \max \{ j \st \beta_{a,j}(S/I) \neq 0 \}. $$ 

We say that the degrees of the Betti numbers of $I$ satisfy the
\emph{subadditivity property } if $$t_{a+b}(I) \leq t_a(I) + t_b(I)$$
for all $a,b >0$ with $a+b \leq p$, where $p$ is the projective
dimension of $S/I$.

It is known that in general the subadditivity property does not
hold~(Avramov, Conca, Iyengar~\cite{ACI}), but under restrictive
conditions, many cases have been known to hold. These include some
algebras of krull dimension at most~1~(Eisenbud, Huneke and
Ulrich~\cite{EHU}), when $a=1$ (Fern\'andez-Ramos and Gimenez
~\cite{FG} if $I$ is generated by degree~2 monomials, Herzog and
Srinivasan~\cite{HS} when $b$ is the projective dimension of $S/I$ or
when $I$ is any monomial ideal), in certain homological degrees in the
case of Gorenstein algebras (El Khoury and Srinivasan~\cite{ES}) and
the case $a=1,2,3$ for a monomial ideal generated in degree
2~(Abedelfatah and Nevo~\cite{AN}).  Otherwise, the question is wide
open for the class of monomial ideals.

We approach this problem using ``lattice complements'' which appear in the
topology of lattices. Two elements of a lattice are complements if
their join and their meet are $\hat{1}$ and $\hat{0}$,
respectively. In the case of two monomials in the lcm lattice of a
monomial ideal, they are complements if their $\gcd$ is not in the
ideal and their lcm is the lcm of all the generators.

Our motivation for using complements is the fact that if the top
degree Betti number of a monomial ideal $I$ is nonzero, then every
monomial in the lcm lattice of $I$ has a complement. This follows from
the interpretation of Betti numbers of monomial ideals in terms of
homology of open intervals in lattices by Gasharov, Peeva and
Welker~(\cite{GPW,P}) and Baclawski's~(\cite{B}) work that relates the
homology of lattices to the existence of complements.

By polarization, to find Betti numbers of monomial ideals it is enough
to consider Betti numbers of square-free monomial ideals. Moreover, in
this case inquiries about a specific graded Betti number reduces to
that of ``top degree'' Betti numbers -- see below for more on this.

So we ask the following question.

\begin{question}\label{q:main} If $I$ is a square-free monomial involving $n$
  variables and $\beta_{i,n} (S/I)\neq 0$, $a,b >0$ and $i=a+b$, are
  there complements $\m$ and $\m'$ in the lcm lattice of $I$ with
  nonzero multigraded Betti numbers $\beta_{a,\m}(S/I)$ and
  $\beta_{b,\m'}(S/I)$?
\end{question}

If the answer is positive, then the subadditivity conjecture is true
for all monomial ideals, since $\deg(\m)+ \deg(\m') \geq n$.

In this paper we give a positive answer to this question in the case
where $I$ is the facet ideal of a simplicial forest $\D$. In this
case, we take advantage of the a recursive formula relating Betti
numbers in each homological degree to lower ones~\cite{HV,F2}. What we
really use is the inductive existence of a facet with a free vertex,
which acts as a splitting facet (in the Eliahou-Kervaire sense). We
hope, however, that some of these methods can be used to study the
general version of the subadditivity property for monomial ideals. A
further study to see if Question~\ref{q:main} holds in general would be
extremely useful for this purpose.

\noindent {\bf Acknowledgements:} The author thanks Volkmar Welker for
introducing her to lattice complements, the math department at TU
Darmstadt for their hospitality while this research was done, and the
Canadian funding agency NSERC for their financial support.

\section{Setup}

A {\bf simplicial complex} $\D$ is a set of subsets of a set $A$, such
that if $F \in \D$ then all subsets of $F$ are also in $\D$. Every
element of $\D$ is called a {\bf face} of $\D$, the maximal elements
under inclusion are called {\bf facets} and the {\bf dimension of a
face $F$} of $\D$ is defined as $|F| -1$.  The faces of dimensions 0
and 1 are called {\bf vertices} and {\bf edges}, respectively, and
$\dimn \emptyset =-1$.  The {\bf dimension of $\D$} is the maximal
dimension of its facets.  We denote the set of vertices of $\D$ by
$V(\D)$. 

A {\bf subcollection} of $\D$ is a simplicial complex whose facets are
also facets of $\D$; in other words a simplicial complex generated by
a subset of the set of facets of $\D$. If $u \subseteq V(\D)$, then
the subcollection $\D_{[u]}$ consisting of all facets of $\D$
contained in $u$ is an {\bf induced subsollection of $\D$}.

We denote the set of facets of $\D$ by
$\facets(\D)$. If $\facets(\D)=\{\Fs\}$, we write
$\D=\tuple{\Fs}$. The
  simplicial complex obtained by {\bf removing the facet} $F_i$ from
  $\D$ is 
 $$\D \rmv{F_i}=\tuple{F_1,\ldots,\hat{F}_{i},\ldots,F_q}.$$ 

  If $F$ is a facet of $\D$, then $\fbar=V(\D) \sm F$, i.e. all
  vertices of $\D$ that are not in $F$.

  A facet $F$ of $\D$ simplicial complex is called a {\bf leaf}
  if either $F$ is the only facet of $\D$ or for some facet
  $G \in \D \rmv{F}$ we have $F \cap (\D \rmv{F}) \subseteq G.$
  Equivalently, we can say a facet $F$ is a leaf of $\D$ if
  $F \cap (\D \rmv{F})$ is a face of $\D \rmv{F}$.  It follows
  immediately from the definition above that a leaf $F$ must contain
  at least one vertex that belongs to no other facet of $\D$ but $F$;
  we call such a vertex a {\bf free vertex}.

  We call $\D$ a {\bf simplicial forest} if every nonempty
  subcollection of $\D$ has a leaf.  A connected simplicial forest is
  called a {\bf simplicial tree}.

Let $S=k[\xs]$ and $I$ a square-free monomial ideal in $S$.  For a
subset $u \subset \{\xs\}$, we denote by $\m_u$ the  {\bf  square-free
monomial with  support $u$}, that is $$\m_u= \Pi_{x_i \in u} x_i.$$

The {\bf facet complex of $I$}, denoted is the simplicial
complex $$\F(I)=\tuple{u \st \m_u \mbox{ is a generator of } I}.$$
Conversely, given a simplicial complex $\D$ on vertices from the set
$\{\xs\}$, we can define the {\bf facet ideal of $\D$} as
$$\F(\D)=(\m_u \st u \mbox{ is a facet of } \D),$$ which is an ideal of $S$.

One of the properties of simplicial trees that we will be using in
this article is the following.

\begin{lemma}[Localization of a forest is a
  forest~\cite{F1}]\label{l:localization} Suppose $\Delta$ is a
  simplicial forest with facet ideal $I$ in the polynomial ring
  $S$. Then for any prime ideal $p$ of $S$, $I_p$ is the facet ideal
  of a simplicial forest which we denote by $\D_p$. \end{lemma}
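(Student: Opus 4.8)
The plan is to localize at a suitable prime and use the fact from Lemma~\ref{l:localization} that localizing a forest yields a forest. The statement to prove is that if $\Delta$ is a simplicial forest with facet ideal $I$, then for any prime $p$ of $S$ the localization $I_p$ is the facet ideal of a simplicial forest $\Delta_p$. First I would reduce to the case of a monomial prime $p = \langle x_i \st i \in u \rangle$ for some subset $u \subseteq \{\xs\}$: since $I$ is a square-free monomial ideal, $I_p = (IS_q)\cap S$ where $q$ is the largest monomial prime contained in $p$, so without loss of generality $p$ is generated by a subset of the variables. With this reduction, $I_p$ is obtained from $I$ by (i) discarding every generator $\m_F$ of $I$ whose support $F$ is \emph{not} contained in $u$ (such a generator becomes a unit after inverting some $x_j$ with $j \notin u$), and (ii) interpreting the surviving generators inside $S_p$. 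Concretely, $I_p$ is generated by $\{\m_F \st F \in \facets(\Delta),\ F \subseteq u\}$, i.e. $\F(I_p) = \Delta_{[u]}$, the induced subcollection of $\Delta$ on the vertex set $u$.

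So the heart of the matter is to show: \emph{an induced subcollection of a simplicial forest is a simplicial forest}. I would argue this directly from the defining property. Let $\Gamma = \Delta_{[u]}$ and let $\Gamma'$ be any nonempty subcollection of $\Gamma$; we must produce a leaf of $\Gamma'$. Every facet of $\Gamma'$ is a facet of $\Gamma$, hence a facet of $\Delta$, so $\Gamma'$ is itself a subcollection of $\Delta$. Since $\Delta$ is a simplicial forest, $\Gamma'$ has a leaf $F$; that is, either $F$ is the only facet of $\Gamma'$, or there is a facet $G \in \Gamma' \rmv{F}$ with $F \cap (\Gamma' \rmv{F}) \subseteq G$. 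But this is precisely the condition for $F$ to be a leaf of $\Gamma'$ \emph{as a complex in its own right} --- the leaf condition only references the facets of $\Gamma'$ itself, not the ambient complex. Hence every nonempty subcollection of $\Gamma$ has a leaf, so $\Gamma$ is a simplicial forest, and $\Delta_p := \Gamma$ does the job.

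The one place requiring a little care --- and what I expect to be the main (minor) obstacle --- is the bookkeeping in the reduction step: verifying carefully that $I_p$ really is generated by exactly the monomials $\m_F$ with $F \subseteq u$, including the edge cases where some facet of $\Delta$ is contained in $u$ but, say, is a subset of another such facet (which cannot happen among facets, but could conceivably produce redundancy once we pass to a subset of generators). Here one uses that the generators of a square-free monomial ideal are its unique minimal generators and that inverting the variables outside $u$ neither creates new generators nor merges old ones: $\m_F \mid \m_G$ in $S_p$ iff $\m_F \mid \m_G$ in $S$ for $F,G \subseteq u$. Once this is nailed down, the topological part is immediate from the observation that the leaf property is inherited by subcollections, which is really a tautology given how "simplicial forest" is defined (via \emph{all} nonempty subcollections having leaves). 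No homological input or properties of Betti numbers are needed for this lemma.
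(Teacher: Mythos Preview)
Your reduction to a monomial prime $p=(x_i : x_i\in u)$ is fine, but the description of $I_p$ that follows is incorrect, and this invalidates the rest of the argument. You claim that a generator $\m_F$ with $F\not\subseteq u$ ``becomes a unit after inverting some $x_j$ with $j\notin u$''. That is false: inverting $x_j$ for $j\notin u$ turns $\m_F=\prod_{x_i\in F}x_i$ into a unit times $\m_{F\cap u}$, which is a unit only when $F\cap u=\emptyset$. In general the minimal monomial generators of $I_p$ are the monomials $\m_{F\cap u}$, i.e.\ one takes the \emph{minimal} elements (under inclusion) of the family $\{F\cap u : F\in\facets(\Delta)\}$. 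The paper itself uses exactly this description just after Equation~(\ref{e:glo}), where $\Gamma=(\Delta\rmv{F})_{\fbar}$ is said to have as facets the minimal elements of $F_1\sm F,\ldots,F_q\sm F$.

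Consequently $\Delta_p$ is \emph{not} the induced subcollection $\Delta_{[u]}$, and in particular its facets need not be facets of $\Delta$ at all. Your key step --- ``every subcollection of $\Gamma$ is a subcollection of $\Delta$, hence has a leaf'' --- therefore does not apply. The actual proof (which the paper does not reproduce but cites from~\cite{F1}) requires showing that when you replace each facet $F_i$ by $F_i\cap u$ and discard the non-minimal ones, every subcollection of the resulting complex still has a leaf; this needs a genuine argument tracking how the leaf/joint structure behaves under intersection with $u$, not merely the tautology about subcollections. As a small sanity check that the two complexes differ: if $\Delta=\langle \{x_1,x_2\},\{x_2,x_3\}\rangle$ and $u=\{x_1,x_3\}$, then $\Delta_{[u]}$ is empty while $\Delta_p=\langle\{x_1\},\{x_3\}\rangle$.
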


For a simplicial complex $\D$ with $I=\F(\D) \subseteq S$, by
$\beta_{i,j}(\D)$ we mean $\beta_{i,j}(S/I)$. The localization
property has a substantial effect on the calculation of Betti numbers
of forests.

If $I$ is a square free monomial ideal in the polynomial ring
$S=k[\xs]$ and with facet complex $\D$, then every graded Betti number
$\beta_{i,j}(S/I)$ is calculated by taking the sum of all
\emph{multigraded} Betti numbers $\beta_{i,\m}(S/I)$ where $\m$ is a
square-free monomial in $S$ of degree $j$ (see for
example~\cite{P}). Such a monomial $\m$ is in fact $\m_u$ for some
$u\subseteq \{\xs\}$, and using for example the Taylor complex~\cite{T,P}, one
can see that
$$\beta_{i,\m_u}(\D)= \beta_{i,j}(\D_{[u]})=\beta_{i,|u|}(\D_{[u]})$$
where $\D_{[u]}$ is the induced subcollection of $\D$ on $u$, and
$j=|u|$. The Taylor resolution also shows that if $\beta_{i,\m_u}(\D)
\neq 0$, then $\D_{[u]}$ must have exactly $u$ vertices. These observations
reduce the calculation of $\beta_{i,j}(\D)$ to the calculation of the
``top degree'' Betti numbers of certain subcollections~(see Remark~2.3
and Lemma~3.1 of~\cite{EF1}).

When $I$ is the facet ideal of a simplicial tree $\D$, much more about
$\beta_{i,j}(\D)$ is known, see for example~\cite{EF1,EF2}. In
particular, a recursive formula for the calculation of the Betti
numbers of trees in~\cite{F2}, which is deduced from a splitting
formula due to H\`a and Van Tuyl (Theorem~5.5 of~\cite{HV}) can be
used effectively in this case.

If $\D$ is a connected simplicial complex with facet ideal $I$ and
$F$ is facet of $\D$ with a free vertex (for example a leaf of a tree), then
H\`a and Van Tuyl's Theorem~5.5 gives (\cite{F2}), for $i,j\geq 0$
\begin{align}\label{e:glo}\beta_{i,j}(\D)=
\beta_{i,j}(\D\rmv{F})+ \beta_{i-1,j-|F|}((\D\rmv{F})_{\fbar}).
\end{align}
\idiot{Ha and Van Tuyl have this only for $i \geq 1$, but one can show
  that it also works for $i=0,1$ by a direct argument.}
  
In what follows, we will use the localized complex
$\Gamma=(\D\rmv{F})_{\fbar}$ extensively. It is worth observing that
if $\D=\tuple{\Fs}$ then $\Gamma$ has as facets the minimal elements,
under inclusion, of $$F_1 \sm F, \ldots, F_q \sm F$$ which correspond to
the generators of the ideal $I_{\left (\fbar \right )}$.  This leads to the
following observation, which was used in the case of trees
in~\cite{EF2}.

\begin{proposition}\label{p:local-prop}
  Let $\D$ be a simplicial complex on $n$ vertices, $F$ a facet of $\D$
  with a free vertex, and suppose $\Gamma=\left (\D \rmv{F}\right
  )_{\fbar}$. Then for every $i$ we
  have $$\beta_{i,n}(\D)=\beta_{i-1,n-|F|}(\Gamma).$$
\end{proposition}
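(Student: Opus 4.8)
The plan is to apply the recursion~\eqref{e:glo} with $j=n$ and to observe that the first summand on its right-hand side vanishes, so that only the term involving $\Gamma$ survives.

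The first thing I would establish is that $\D\rmv F$ has at most $n-1$ vertices: since $F$ is a facet, $V(\D)=V(\D\rmv F)\cup F$, and since $F$ has a free vertex $v$, that vertex lies in no facet of $\D\rmv F$, so $v\notin V(\D\rmv F)$. Next I would recall, as noted above via the Taylor complex, that a nonvanishing $\beta_{i,n}(\D')=\beta_{i,x_1\cdots x_n}(\D')$ — the latter being the only degree-$n$ squarefree monomial of $S=k[\xs]$, with support all of $\{\xs\}$ — forces the induced subcollection $\D'_{[\{\xs\}]}=\D'$ to have exactly $n$ vertices. Applied to $\D'=\D\rmv F$, which has fewer than $n$ vertices, this gives $\beta_{i,n}(\D\rmv F)=0$ for every $i$. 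Granting~\eqref{e:glo}, substituting $j=n$ then yields
$$\beta_{i,n}(\D)=\beta_{i,n}(\D\rmv F)+\beta_{i-1,n-|F|}\bigl((\D\rmv F)_{\fbar}\bigr)=\beta_{i-1,n-|F|}(\Gamma),$$
which is exactly the assertion.

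The one point that needs care — and the part I expect to be the main obstacle — is that~\eqref{e:glo} was stated only for \emph{connected} $\D$, while the Proposition allows an arbitrary simplicial complex. I would dispose of this in one of two ways. Either: observe that the H\`a--Van Tuyl Betti splitting behind~\eqref{e:glo} requires only that $F$ possess a free vertex and not that $\D$ be connected, so that~\eqref{e:glo} holds verbatim and the displayed computation already finishes the proof. Or, to stay strictly within the statement of~\eqref{e:glo}, reduce to the connected case by writing $\D=C\sqcup\D'$ with $F\in C$ and $\D'$ the union of the remaining connected components on a disjoint vertex set, so that $S/\F(\D)\cong S/\F(C)\otimes_k S/\F(\D')$; a multigraded K\"unneth argument then writes $\beta_{i,n}(\D)$ as $\sum_{a+a'=i}\beta_{a,m}(C)\,\beta_{a',n-m}(\D')$ and $\beta_{i-1,n-|F|}(\Gamma)$ as $\sum_{a+a'=i-1}\beta_{a,m-|F|}(\Gamma_C)\,\beta_{a',n-m}(\D')$, where $m=|V(C)|$ and $\Gamma_C=(C\rmv F)_{V(C)\sm F}$ (using once more that every factor appearing in a top-degree Betti number is itself of top degree, which forces all the multidegrees), and then one applies the connected case to $C$ and reindexes $a\mapsto a-1$ to match the two sums.
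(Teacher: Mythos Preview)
Your proposal is correct and follows essentially the same route as the paper: apply~\eqref{e:glo} at $j=n$, kill the first summand because the free vertex of $F$ forces $\D\rmv F$ to have fewer than $n$ vertices, and for the disconnected case reduce via a K\"unneth-type product formula to the connected component containing $F$. The only cosmetic difference is that the paper splits $\D$ into \emph{all} of its connected components $\D_1,\ldots,\D_r$ (citing Lemma~3.2 of~\cite{EF1} for the product formula) rather than your two-piece decomposition $C\sqcup\D'$, and it does not invoke your alternative of extending~\eqref{e:glo} directly to the disconnected setting.
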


    \begin{proof} If $\D$ is connected, then since
      $F$ has a free vertex, $\D\rmv{F}$ has strictly less than $n$
      vertices, so in Equation~(\ref{e:glo})
      $\beta_{i,n}(\D\rmv{F})=0$, which results in
      $\beta_{i,n}(\D)=\beta_{i-1,n-|F|}(\Gamma).$

     Suppose $\D$ has connected components
     $\D_1,\ldots,\D_r$ each with $n_1,\dots,n_r$
     vertices, respectively. Assume, without loss of generality, $F$
     is a facet of $\D_1$.  The connected components of $\Gamma$ are of the form
     $$\Gamma_a= \left ( \D_a\rmv{F}\right ) _{\fbar}$$ where if
     $a>1$, one can see immediately that $\Gamma_a=\D_a$, as $F$ and
     $\D_a$ will have no vertices in common.
    So we can write (see Lemma~3.2
     of~\cite{EF1})
      \begin{align*}\beta_{i,n}(\D)& = \displaystyle \sum_{u_1+\cdots +
          u_r=i}\beta_{u_1,n_1}(\D_1) \ldots \beta_{u_r,n_r}(\D_r) \notag \\
        &=  \displaystyle \sum_{u_1+\cdots +
          u_r=i}\beta_{u_1-1,n_1-|F|}\left ((\D_1 \rmv{F})_{\fbar}\right ) \beta_{u_2,n_2}(\D_2)\ldots \beta_{u_r,n_r}(\D_r) \notag\\
                &=  \displaystyle \sum_{u_1+\cdots +
          u_r=i}\beta_{u_1-1,n_1-|F|}(\Gamma_1) \beta_{u_2,n_2}(\Gamma_2)\ldots \beta_{u_r,n_r}(\Gamma_r) \notag\\
        &= \beta_{i-1,n-|F|}(\Gamma).
      \end{align*}
\idiot{for the last equation, if $\Gamma_1$ has strictly less than
  $n_1-|F|$ vertices, then the Betti numbers will just be 0, and
  equality will still hold.}
   \end{proof}

 \erase{   
    \begin{theorem}[\cite{EF1,EF2}]\label{t:jpaa} Let $I$ be the facet
  ideal of a simplicial forest.
  \begin{enumerate}
    \item $\beta_{a,j}(S/I) \neq 0 $ if and only if $\D$ has an induced
      subcollection $\D_{[W]}$ where \begin{enumerate}
      \item $|W|=j$, and
      \item $\D_{[W]}$ has a
      well-ordered facet cover of cardinality $a$.
      \end{enumerate}
    \item If $\beta_{a,W}(S/I) \neq 0$ for some $a$ and $W$, then
      $\beta_{\ell,W}(S/I) = 0$ if $\ell \neq a$.
    \item $\beta_{a,W}(S/I)$ is always $0$ or $1$.
  \end{enumerate}
\end{theorem}
}

\section{Betti numbers of complements}

Let $I$ be a square-free monomial ideal in $S=k[\xs]$. We denote the
lcm lattice of $I$ by $\LCM(I)$. The atoms of this lattice are the
generators of $I$ and the other members are lcm's of the generators of
$I$ ordered by divisibility. The top element $\hat{1}$ is the lcm of
all generators of $I$, and the bottom element $\hat{0}$ is
$1$. See~\cite{P} for more on lcm lattices and their properties.

The following definition is an adaptation of the usual concept of lattice
complements to the lcm lattice.

\begin{definition} Let $I$ be a square-free monomial ideal. Two
  monomials $\m$ and $\m'$ in $\LCM(I)\sm~\{\hat{0},\hat{1}\}$
  are called {\bf complements} if
\begin{itemize}
\item $\lcm(\m,\m')=\hat{1}$ and 
\item $\gcd(\m,\m') \notin I$.
\end{itemize}
\end{definition}

This definition leads directly to the following statement.

\begin{lemma}\label{l:complement} Let $\D$ be a simplicial complex on 
  $n$ vertices and with facet ideal $I \subseteq S$, and let $u$ and
  $v$ be two proper subsets of $\{\xs\}$, with $\m_u, \m_v \in
  \LCM(I)\sm~\{\hat{0},\hat{1}\}$ . Then the following are equivalent.
\begin{enumerate}
\item $\m_u$ and $\m_v$ are complements in $\LCM(I)$ 
\item \begin{enumerate}
\item $u \cup v = \{\xs\}$, and 
\item the two induced subcollections $\D_{[u]}$ and $\D_{[v]}$ have no facet in common.
\end{enumerate}
\end{enumerate}
\end{lemma}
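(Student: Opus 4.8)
The plan is to unwind both sides of the equivalence into purely combinatorial statements about the sets $u,v$ and the facets of $\D$, and then observe that they coincide. The key translation is that, for square-free monomials, $\lcm(\m_u,\m_v)=\m_{u\cup v}$ and $\gcd(\m_u,\m_v)=\m_{u\cap v}$, so every condition in the definition of complements becomes a statement about containments of subsets of $\{\xs\}$.

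First I would identify the top of the lattice. Since $\D$ is a simplicial complex \emph{on} the vertex set $\{\xs\}$, each $x_i$ is a vertex of $\D$ and hence lies in some facet; therefore the lcm of all the generators $\m_F$ of $I=\F(\D)$ is $x_1\cdots x_n=\m_{\{\xs\}}$, i.e. $\hat 1=\m_{\{\xs\}}$. Consequently the first defining condition for complements, $\lcm(\m_u,\m_v)=\hat 1$, reads $\m_{u\cup v}=\m_{\{\xs\}}$, which is exactly $u\cup v=\{\xs\}$; this is part~(a). Next I would handle the $\gcd$ condition. For any $w\subseteq\{\xs\}$, the monomial $\m_w$ lies in $I$ if and only if some generator $\m_F$ of $I$ divides it, equivalently some facet $F$ of $\D$ satisfies $F\subseteq w$. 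Applying this with $w=u\cap v$, the condition $\gcd(\m_u,\m_v)=\m_{u\cap v}\notin I$ is equivalent to the statement that no facet of $\D$ is contained in $u\cap v$. On the other hand, by the definition of the induced subcollection the facets of $\D_{[u]}$ are precisely the facets of $\D$ contained in $u$ (and similarly for $v$), so $\D_{[u]}$ and $\D_{[v]}$ share a facet exactly when some facet $F$ of $\D$ satisfies both $F\subseteq u$ and $F\subseteq v$, i.e. $F\subseteq u\cap v$. Thus the second defining condition for complements is equivalent to part~(b). Combining the two equivalences yields the lemma.

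I do not expect a substantial obstacle here: the argument is a careful unpacking of definitions. The only points that need a word of care are that $\hat 1=\m_{\{\xs\}}$, which relies on the standing convention that $\D$ has vertex set exactly $\{\xs\}$ (no variable of $S$ is absent from every facet), and that a facet $F$ of $\D$ with $F\subseteq u$ really is a facet of $\D_{[u]}$, which holds because no facet of $\D$ properly contains another, so such an $F$ is still maximal among the faces of $\D_{[u]}$.
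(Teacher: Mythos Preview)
Your proposal is correct and matches the paper's approach: the paper simply states that the lemma follows directly from the definition of complements and gives no further proof, and your argument is precisely the careful unpacking of those definitions. Your extra remark about a facet $F\subseteq u$ being a facet of $\D_{[u]}$ is in fact automatic from the paper's definition of $\D_{[u]}$ as the subcollection generated by all facets of $\D$ contained in $u$, so no maximality check is needed.
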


Based on the observation above, we call two proper induced
subcollections $\D_{[u]}$ and $\D_{[v]}$ of $\D$ {\bf complements in
  $\D$} if the two conditions in Lemma~\ref{l:complement}~(2) hold.

\begin{lemma}\label{l:induced-localization} Let $\D$ be a simplicial 
  complex, $F$ a facet of $\D$ with a free vertex,
  $\Gamma=\left(\D \sm \tuple{F}\right )_{\overline{F}}$, and
  $u \subset \overline{F}$. 
  \begin{enumerate}
   \item $\Gamma_{[u]}= \left(\Delta_{[F\cup u]}\sm \tuple{F}\right ) _
  {\overline{F}}.$
\item If $\Gamma_{[u]}$ has $|u|$ vertices then $\D_{[F\cup u]}$ has
  $|u \cup F|=|u| + |F|$ vertices.
\end{enumerate}
\end{lemma}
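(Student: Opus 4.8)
The plan is to unwind both statements directly from the definitions of induced subcollection and localization, keeping careful track of vertex sets. For part (1), I would argue by showing the two simplicial complexes $\Gamma_{[u]}$ and $\left(\Delta_{[F\cup u]}\sm\tuple{F}\right)_{\overline{F}}$ have the same facets. Recall that the facets of $\Gamma=\left(\Delta\sm\tuple{F}\right)_{\overline{F}}$ are the minimal elements under inclusion among the sets $G\sm F$, as $G$ ranges over the facets of $\Delta\sm\tuple{F}$. So a facet of $\Gamma_{[u]}$ is such a minimal $G\sm F$ that happens to be contained in $u$. The key point is that since $u\subseteq\overline{F}$, the condition $G\sm F\subseteq u$ is equivalent to $G\subseteq F\cup u$: indeed $G\sm F\subseteq u$ forces $G\subseteq F\cup u$, and conversely $G\subseteq F\cup u$ gives $G\sm F\subseteq (F\cup u)\sm F = u\sm F = u$ because $u$ and $F$ are disjoint. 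So the facets $G$ of $\Delta$ contributing a subset of $u$ to $\Gamma$ are exactly the facets of $\Delta$ contained in $F\cup u$ other than $F$ itself, i.e. the facets of $\Delta_{[F\cup u]}\sm\tuple{F}$. I would then just need to check the minimality bookkeeping is unaffected: taking minimal elements among $\{G\sm F : G\in\Delta_{[F\cup u]}\sm\tuple{F}\}$ versus restricting the minimal elements of $\{G\sm F : G\in\Delta\sm\tuple{F}\}$ to those lying in $u$ gives the same set, since any $G$ contributing to the latter list with $G\sm F\subseteq u$ already has all of its ``competitors'' for minimality (subsets $G'\sm F\subseteq G\sm F$) also contained in $u$, hence also in $\Delta_{[F\cup u]}$. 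This yields part (1).

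For part (2), I would use part (1) together with the disjointness $F\cap u=\es$ (which holds since $u\subseteq\overline{F}=V(\Delta)\sm F$). The vertices of $\Gamma_{[u]}$ are by definition contained in $u$, and the hypothesis says there are exactly $|u|$ of them, so $\Gamma_{[u]}$ has vertex set exactly $u$; in particular every vertex of $u$ appears in some facet of $\Gamma_{[u]}$, which by part (1) is of the form $G\sm F$ for some facet $G$ of $\Delta_{[F\cup u]}$ with $G\neq F$. Hence every vertex of $u$ lies in some facet of $\Delta_{[F\cup u]}$. On the other hand every vertex of $F$ lies in the facet $F$ of $\Delta_{[F\cup u]}$ (note $F\subseteq F\cup u$, so $F$ is indeed a facet of the induced subcollection $\Delta_{[F\cup u]}$). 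Therefore the vertex set of $\Delta_{[F\cup u]}$ is all of $F\cup u$, which has cardinality $|u|+|F|$ by disjointness.

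I expect the main obstacle to be purely notational: being scrupulous about the distinction between the vertex set of an (abstract) induced subcollection and the ambient set $F\cup u$ or $\overline{F}$ that indexes it, since induced subcollections can a priori use fewer vertices than their index set. The hypothesis in part (2) is precisely what rules this out for $\Gamma_{[u]}$, and the presence of the free vertex of $F$ — which guarantees $F$ survives as a genuine facet and is not absorbed into another facet — is what makes the $F$-part of the count clean. Everything else is a matter of chasing the definition of localization (minimal elements of the sets $G\sm F$) through the induced-subcollection operation, which commute in the sense made precise by part (1).
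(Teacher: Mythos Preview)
Your proposal is correct and follows essentially the same approach as the paper: both prove part~(1) by identifying the facets on each side via the equivalence $G\sm F\subseteq u \Longleftrightarrow G\subseteq F\cup u$ and checking that minimality is preserved (the paper organizes this as two explicit inclusions, you combine them into one observation), and part~(2) is argued identically by noting that $F$ together with the facets supplied by part~(1) already cover all $|F|+|u|$ vertices of $F\cup u$. One small remark: the free-vertex hypothesis is not actually what guarantees $F$ is a facet of $\Delta_{[F\cup u]}$ --- that holds simply because $F$ is a facet of $\Delta$ and $F\subseteq F\cup u$ --- and indeed neither your argument nor the paper's uses the free vertex in this lemma.
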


     \begin{proof} 

      \begin{enumerate}
       \item We show each inclusion.
      \begin{itemize}

       \item[($\subseteq$)] Let $G \sm F$ be a facet of
         $\Gamma_{[u]}$, then $G$ is a facet of $\D$ such that $(G \sm
         F) \subseteq u$, which implies that $G \subseteq u \cup
         F$. Hence $G$ is a facet of $\D_{[F \cup u]}$. If $H$ is
         another facet of $\D_{[F \cup u]}$ with $(H \sm F) \subsetneq
         (G \sm F)$, then as $(H \sm F) \subseteq u$ we will have $(H
         \sm F) \in \Gamma_{[u]}$ which contradicts the fact
         that $G\sm F$ is a facet of $\Gamma_{[u]}$.

       \item[($\supseteq$)] Suppose $G \sm F$ is a facet of
         $\left(\Delta_{[F\cup u]}\sm \tuple{F}\right ) _
         {\overline{F}}$.
         Then $G \subseteq F \cup u$ and hence
         $(G \sm F) \subseteq u$. If $(G \sm F) \notin \Gamma_{[u]}$
         then there is another facet $H$ of $\D$ with
         $(H \sm F) \subsetneq (G \sm F) \subseteq u$. But then then
         $H\sm F \in \left(\Delta_{[F\cup u]}\sm \tuple{F}\right
         )_{\overline{F}}$, which is a contradiction.
     \end{itemize}

   \item Suppose $\Gamma_{[u]}=\tuple{G_1 \sm F, \ldots, G_t \sm F}$,
     where by the previous part we can pick $G_1,\ldots,G_t$ to be
     facets of $\D_{[F \cup u]}$. Then
     $|(G_1\cup \ldots \cup G_t)\sm F|=|u|$ and also
     $F \in \D_{[F \cup u]}$. Since $u \cap F=\emptyset$ we have
     already accounted for $|F \cup u|$ vertices in $\D_{[F \cup u]}$.
     However this is the maximum number of vertices that
     $\D_{[F \cup u]}$ could have, so our claim is proved.
     \end{enumerate}
     \end{proof}

\begin{theorem}\label{t:comp-facet} Let $\D$ be a simplicial forest
  with $n$ vertices and more than one facet, and suppose
  $\beta_{i,n}(\D)\neq 0$. Then for every facet $G$ of $\D$,
  $\D_{[G]}$ has a complement $\D_{[u]}$ in $\D$ with
  $\beta_{i-1,|u|}(\D_{[u]})\neq~0.$
\end{theorem}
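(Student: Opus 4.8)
The plan is to induct on the number of facets of $\D$, using Proposition~\ref{p:local-prop} to pass to the localized forest $\Gamma=(\D\rmv{F})_{\fbar}$ for a well-chosen leaf $F$. The subtlety is that the facet $G$ in the statement is fixed in advance, so I cannot simply take $F=G$; I need the induction to produce a complement for $\D_{[G]}$ where $G$ is an arbitrary facet, which means I want to peel off a \emph{leaf} $F\neq G$. Since $\D$ is a simplicial forest with more than one facet, it has at least two leaves (this is the standard fact that a forest with $\geq 2$ facets has two leaves with free vertices — I would cite the structure theory of simplicial trees/forests used implicitly in \cite{F1,EF1}), so I can choose a leaf $F$ with $F\neq G$. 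By Lemma~\ref{l:localization}, $\Gamma$ is again a simplicial forest, and by Proposition~\ref{p:local-prop}, $\beta_{i,n}(\D)\neq 0$ forces $\beta_{i-1,n-|F|}(\Gamma)\neq 0$.

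Next I would run the induction on $\Gamma$. Because $F$ is a leaf with a free vertex and $G\neq F$, the vertices of $G$ that survive in $\fbar$ form (the vertex set of) a facet $G'$ of some induced subcollection of $\Gamma$; more precisely, $G\sm F\supseteq$ some minimal face that is a facet of $\Gamma$, and by Lemma~\ref{l:induced-localization}(1) applied with $u$ the vertex set of that facet, $\Gamma_{[G\sm F]}=(\D_{[F\cup(G\sm F)]}\rmv{F})_{\fbar}$. If $\Gamma$ has more than one facet, the inductive hypothesis gives a complement $\Gamma_{[u]}$ of $\Gamma_{[G\sm F]}$ in $\Gamma$ with $\beta_{i-2,|u|}(\Gamma_{[u]})\neq 0$; here $u\subseteq\fbar$ and $u\cup(G\sm F)=\fbar$. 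I then lift this back to $\D$: set $w=u\cup F$. Using Lemma~\ref{l:induced-localization}, $\Gamma_{[u]}=(\D_{[F\cup u]}\rmv{F})_{\fbar}$, and since $F$ has a free vertex in $\D_{[F\cup u]}$, Proposition~\ref{p:local-prop} (applied to the subcollection $\D_{[w]}$) gives $\beta_{i-1,|w|}(\D_{[w]})=\beta_{i-2,|u|}(\Gamma_{[u]})\neq 0$, with $|w|=|u|+|F|$ by Lemma~\ref{l:induced-localization}(2). It remains to check that $\D_{[G]}$ and $\D_{[w]}$ are complements in $\D$ in the sense of Lemma~\ref{l:complement}(2): the vertex-cover condition $G\cup w=V(\D)$ follows from $u\cup(G\sm F)=\fbar$ together with $F\subseteq w$; and no common facet — any common facet would have to lie in $G\cap w$, but since $F\not\subseteq G$ (as $F$ has a free vertex not in $G$) while every facet of $\D_{[w]}$ either equals $F$ or has a vertex outside $F$, a short case analysis reduces a hypothetical common facet to a common facet of $\Gamma_{[G\sm F]}$ and $\Gamma_{[u]}$ in $\Gamma$, contradicting that they are complements there.

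The base case is when $\Gamma$ has only one facet. Then $\beta_{i-1,n-|F|}(\Gamma)\neq 0$ forces $i-1=1$, i.e. $i=2$, and $n-|F|$ equals the size of that single facet, call it $e$; the single facet of $\Gamma$ is the minimal face among $\{F_j\sm F\}$, and correspondingly $\D$ has (up to induced subcollection on $V(\D)$) two facets $F$ and a facet $G_0$ with $G_0\sm F$ equal to that minimal face. Here $\D_{[G]}$ for the prescribed $G$ is one of these; its complement can be taken to be $\D_{[u]}$ with $u$ the \emph{other} facet's vertex set (together with enough free vertices to cover $V(\D)$), and $\beta_{1,|u|}(\D_{[u]})\neq 0$ since a single-facet facet ideal is principal with a linear first syzygy entry — more carefully, $\D_{[u]}$ should be chosen so that it is a single facet, giving $\beta_{1,|u|}=1$, and one checks the two vertex sets cover $V(\D)$ and share no facet. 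I would also need to separately dispose of the degenerate possibility that after localization two of the $F_j\sm F$ coincide or one contains another, which is exactly handled by passing to minimal elements as in the remark before Proposition~\ref{p:local-prop}.

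\textbf{Main obstacle.} The genuine difficulty is not the homological bookkeeping — that is forced by Proposition~\ref{p:local-prop} and Lemma~\ref{l:induced-localization} — but guaranteeing at each inductive step that there is a leaf $F$ \emph{distinct from the prescribed $G$}, and, more delicately, that $G\sm F$ is nonempty and still "sees" a facet of $\Gamma$ on which the induction can run (i.e. that $\D_{[G]}$ does not get destroyed or absorbed in passing to $\Gamma$). If $G$ were to become a non-face or get swallowed by another facet in $\Gamma$, the induced subcollection $\Gamma_{[G\sm F]}$ would not be "$\D_{[G]}$ localized" in the needed sense and the lift-back step would fail. Handling this cleanly likely requires choosing $F$ to be a leaf whose free vertex and whose intersection pattern avoid $G$ — using that a forest on $\geq 2$ facets has at least two leaves so that at least one leaf is $\neq G$ and, if necessary, choosing the leaf whose removal keeps $G$ as an induced facet. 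Verifying the no-common-facet half of the complement condition after lifting is the second most delicate point, since it requires tracking which facets of $\D_{[w]}$ come from $F$ versus from $\Gamma_{[u]}$.
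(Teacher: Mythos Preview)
Your approach is essentially the paper's: pick a leaf $F\neq G$ (using that a forest with $\geq 2$ facets has two leaves), pass to $\Gamma=(\D\rmv{F})_{\fbar}$ via Proposition~\ref{p:local-prop}, apply induction there, and lift the resulting complement back using Lemma~\ref{l:induced-localization}. The skeleton is right and would go through.

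Two places where the paper is cleaner than your write-up, and which dissolve the ``main obstacle'' you flag. First, the paper inducts on $n=|V(\D)|$, with base case $n=2$ (two isolated vertices). This avoids your awkward ``$\Gamma$ has one facet'' base case entirely: since $\Gamma$ has $n-|F|<n$ vertices, the induction applies directly whenever $\Gamma$ has more than one facet, and the one-facet case is absorbed into small $n$. Second --- and this is the point that resolves your worry about $G$ being ``swallowed'' in $\Gamma$ --- the paper does \emph{not} apply the inductive hypothesis to $\Gamma_{[G\sm F]}$. Instead it observes that since $G\in\D\rmv{F}$, the set $G\sm F$ is one of the candidates whose minimal elements form $\facets(\Gamma)$, so automatically there exists a facet $H\sm F$ of $\Gamma$ with $(H\sm F)\subseteq (G\sm F)$; the induction is applied to \emph{that facet}. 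The lifted complement $\D_{[u]}$ (with $u=F\cup v$) is then shown to be a complement of $\D_{[G]}$, not of $\D_{[H]}$: the vertex-cover condition uses $(H\sm F)\subseteq(G\sm F)$ to get $v\cup F\cup G=V(\D)$, and the no-common-facet check is the one-liner that $G\subseteq u=F\cup v$ would force $(H\sm F)\subseteq(G\sm F)\subseteq v$, contradicting that $\Gamma_{[v]}$ and $\Gamma_{[H\sm F]}$ are complements. So there is no need to track whether $G\sm F$ itself is a facet of $\Gamma$, and no delicate leaf choice beyond $F\neq G$ is required.
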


   \begin{proof} We use induction on $n$. The smallest case
      is $n=2$ and $\D$ a complex consisting of two isolated vertices
      $F=\{x_1\}$ and $G=\{x_2\}$, and we have $\beta_{2,2}(\D)\neq 0$
      and $\beta_{1,1}(\D_{[F]}) = \beta_{1,1}(\D_{[G]})\neq 0$. Since
      $\D_{[F]}$ and $\D_{[G]}$ are complements, this settles the
      statement in the case $n=2$. 
  
      Since every forest has at least two leaves~(\cite{F1}), we can
      choose $F$ to be a leaf of $\Delta$ with $F\neq G$.  Suppose
      $\Gamma= \left(\D \sm \tuple{F}\right )_{\overline{F}}$.  By
      Proposition~\ref{p:local-prop}, $\beta_{i,n}(\D)\neq 0$ results
      in $\beta_{i-1,n-|F|}(\Gamma)\neq 0$, which in particular
      implies that $\Gamma$ has $n-|F|$ vertices. 
     
    By Lemma~\ref{l:localization}, $\Gamma$ is a forest, so it
    satisfies the induction hypothesis.

    Now $\Gamma$ has a facet $H\sm F$ such that
    $(H \sm F) \subseteq (G \sm F)$ (with $H=G$ possible). By the
    induction hypothesis, $\Gamma_{[H\sm F]}$ has a complement
    $\Gamma_{[v]}$ in $\Gamma$ such that
    $\beta_{i-2,|v|}(\Gamma_{[v]})\neq 0$, which in particular implies
    that $\Gamma_{[v]}$ has $|v|$ vertices.
     
    By Lemma~\ref{l:induced-localization}, $\D_{[F\cup v]}$ has
    $|F|+|v|$ vertices and
     $$\Gamma_{[v]}=\left(\Delta_{[F\cup v]}\sm \tuple{F}\right ) _
     {\overline{F}}.$$ Proposition~\ref{p:local-prop} now implies that
     $\beta_{i-1,|F \cup v|}(\Delta_{[F\cup v]}) \neq 0$.

       We set $u=F \cup v$ and show that $\D_{[u]}$ and $\D_{[G]}$ are
       complements in $\D$. Since $\Gamma_{[v]}$ and $\Gamma_{[H \sm
           F]}$ are complements in $\Gamma$,
       $$v \cup (H \sm F) = V(\Gamma)=(V(\D) \sm F)$$ and add to this
       the fact that $(H \sm F) \subseteq (G \sm F)$ to conclude $$ u \cup
       G= v \cup F \cup G = V(\D).$$

       If $\D_{[u]}$ and $\D_{[G]}$ have a facet in common, then
       $G \in \D_{[u]}$, which implies that $G \subseteq u=v \cup
       F$. On the other hand
       $$(H \sm F) \subseteq (G \sm F) \subseteq v \Longrightarrow
       (H\sm F) \in \Gamma_{[v]},$$ which contradicts $\Gamma_{[v]}$
       and $\Gamma_{[H \sm F]}$ being complements in $\Gamma$.

       So $\D_{[u]}$ and $\D_{[G]}$ are complements in $\D$ and we are
       done.
    \end{proof}    

   \begin{theorem}\label{t:complement-nonzero} Let $\D$ be a simplicial
     forest with more than one facet and facet ideal $I \subseteq S$. Suppose
     $\beta_{i,n}(\D)\neq 0$, and $i=a+b$ for some positive integers
     $a$ and $b$. Then there are complements $\D_{[u]}$ and $\D_{[w]}$
     in $\D$ with $\beta_{a,|u|}(\D_{[u]}) \neq 0$ and
     $\beta_{b,|w|}(\D_{[w]}) \neq 0$.
\end{theorem}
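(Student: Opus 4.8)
The plan is to prove the theorem by induction on $a$, deleting a leaf at each step in the spirit of the proof of Theorem~\ref{t:comp-facet}. By the symmetry of the statement under interchanging $(a,\D_{[u]})$ with $(b,\D_{[w]})$ we may assume $a\leq b$. The base case $a=1$ is immediate from Theorem~\ref{t:comp-facet}: for any facet $G$ of $\D$ we have $\beta_{1,|G|}(\D_{[G]})\neq 0$, since $\D_{[G]}=\tuple{G}$ has a principal facet ideal, and Theorem~\ref{t:comp-facet} produces a complement $\D_{[u]}$ of $\D_{[G]}$ in $\D$ with $\beta_{i-1,|u|}(\D_{[u]})=\beta_{b,|u|}(\D_{[u]})\neq 0$; so $\D_{[u]}$ and $\D_{[w]}:=\D_{[G]}$ work. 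From now on assume $a\geq 2$, hence also $b\geq 2$.

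For the inductive step I would fix a leaf $F$ of $\D$ and set $\Gamma=(\D\rmv F)_{\fbar}$. By Proposition~\ref{p:local-prop}, $\beta_{i-1,n-|F|}(\Gamma)\neq 0$; so $\Gamma$ has exactly $n-|F|$ vertices, namely all of $\fbar$; by Lemma~\ref{l:localization} it is a forest; and since $i-1\geq 3$ it has more than one facet. Applying the inductive hypothesis to $\Gamma$ with the splitting $i-1=(a-1)+b$ gives complements $\Gamma_{[v]}$ and $\Gamma_{[w]}$ in $\Gamma$ with $\beta_{a-1,|v|}(\Gamma_{[v]})\neq 0$ and $\beta_{b,|w|}(\Gamma_{[w]})\neq 0$, and with $v\cup w=V(\Gamma)=\fbar$. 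For the ``$a$-part'' I use the clean lift from Theorem~\ref{t:comp-facet}: set $u=F\cup v$. By Lemma~\ref{l:induced-localization}, $\D_{[u]}$ has $|F|+|v|=|u|$ vertices and $\Gamma_{[v]}=(\D_{[u]}\rmv F)_{\fbar}$, so Proposition~\ref{p:local-prop} gives $\beta_{a,|u|}(\D_{[u]})=\beta_{a-1,|v|}(\Gamma_{[v]})\neq 0$.

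The delicate point is the ``$b$-part'': one cannot simply take $\D_{[w]}$ for $w\subseteq V(\D)$, because $\D_{[w]}$ and $\Gamma_{[w]}$ need not even have the same vertex set (facets of $\D$ that meet $F$ get trimmed on passing to $\Gamma$), so $\beta_{b,|w|}(\D_{[w]})$ may vanish. Instead I would lift $w$ up together with $F$ and then strip $F$ off again. By Lemma~\ref{l:induced-localization}, $\Gamma_{[w]}=(\D_{[F\cup w]}\rmv F)_{\fbar}$ and $\D_{[F\cup w]}$ has $|F|+|w|$ vertices, so Proposition~\ref{p:local-prop} yields $\beta_{b+1,|F\cup w|}(\D_{[F\cup w]})\neq 0$. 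Now $\D_{[F\cup w]}$ is a forest (being a subcollection of $\D$) on $|F\cup w|$ vertices with more than one facet (as $b+1\geq 3$), and $F$ is one of its facets, with $(\D_{[F\cup w]})_{[F]}=\tuple{F}$; applying Theorem~\ref{t:comp-facet} inside $\D_{[F\cup w]}$ to the facet $F$ produces a complement $\D_{[w'']}$ of $\tuple{F}$ in $\D_{[F\cup w]}$ with $\beta_{b,|w''|}(\D_{[w'']})\neq 0$, where $w\subseteq w''\subseteq F\cup w$ and $F\not\subseteq w''$.

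It then remains to check that $\D_{[u]}$ and $\D_{[w'']}$ are complements in $\D$. The union condition holds because $u\cup w''\supseteq(F\cup v)\cup w=F\cup\fbar=V(\D)$. For the other condition, suppose a facet $K$ of $\D$ were contained in both $u=F\cup v$ and $w''\subseteq F\cup w$; then $K\subseteq(F\cup v)\cap(F\cup w)=F\cup(v\cap w)$, and $K\neq F$ (since $F\not\subseteq w''$), so $K\sm F$ is a nonempty subset of $v\cap w$ lying in $\{F_j\sm F : F_j\neq F\}$; a minimal element $L$ of that family with $L\subseteq K\sm F$ is then a facet of $\Gamma$ contained in both $v$ and $w$, i.e.\ a common facet of $\Gamma_{[v]}$ and $\Gamma_{[w]}$, contradicting that these are complements in $\Gamma$. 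Hence $\D_{[u]}$ and $\D_{[w'']}$ are the desired complements. I expect the $b$-part to be the main obstacle: realizing that the naive lift fails, and that the ``lift-and-strip'' detour through $\D_{[F\cup w]}$ together with a second use of Theorem~\ref{t:comp-facet} repairs it, and then carrying out the no-shared-facet verification, is where the real content lies; everything else runs parallel to the proof of Theorem~\ref{t:comp-facet}.
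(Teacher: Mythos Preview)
Your proof is correct and follows essentially the same route as the paper's: reduce to $\Gamma=(\D\rmv F)_{\fbar}$ with the splitting $(a-1)+b$, lift the $a$-side via $u=F\cup v$, lift the $b$-side to $\D_{[F\cup w]}$ to get a $\beta_{b+1}$, then apply Theorem~\ref{t:comp-facet} inside $\D_{[F\cup w]}$ to strip $F$ and recover $\beta_b$, and finally verify complements by pushing a hypothetical shared facet down to a shared facet of $\Gamma_{[v]}$ and $\Gamma_{[w]}$. The only cosmetic difference is that you induct on $a$ (after the WLOG $a\leq b$) whereas the paper inducts on the number of vertices $n$; since passing to $\Gamma$ decreases both, either induction works.
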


        \begin{proof} Without loss of generality assume 
          $\D$ has $n$ vertices. 
          
          We prove the statement by induction on $n$. The base
          case is $n=2$, where $\D$ is two isolated vertices. In this
          case $i=2=1+1$, and the claim follows from
          Theorem~\ref{t:comp-facet}.

          We consider  the general case. From Theorem~\ref{t:comp-facet} the
          statement is true if $a=1$ or $b=1$. So we may assume $a,b
          >1$.  

          Let $F$ be a leaf of $\D$ and let
          $\Gamma= \left(\D \sm \tuple{F}\right )_{\overline{F}}$. By
          Proposition~\ref{p:local-prop}
          $\beta_{i-1,n-|F|}(\Gamma)\neq 0$, and in particular
          $|V(\Gamma)|=n-|F|$.  By the induction hypothesis, since
          $i-1=(a-1)+b$, there are complements $\Gamma_{[u']}$ and
          $\Gamma_{[v']}$ in $\Gamma$ such that
          $$\beta_{a-1,|u'|}(\Gamma_{[u']}) \neq 0 \mbox{ and }
          \beta_{b,|v'|}(\Gamma_{[v']}) \neq 0.$$

          Let $u=u' \cup F$ and $v=v' \cup F$. By
          Lemma~\ref{l:induced-localization} we have that 
          $$\Gamma_{[v']}=\left(\Delta_{[v]}\sm \tuple{F}\right ) _
          {\overline{F}} \mbox{ and }
          \Gamma_{[u']}=\left(\Delta_{[u]}\sm \tuple{F}\right ) _
          {\overline{F}} $$
          and $\D_{[u]}$ and $\D_{[v]}$ have $|u|$ and $|v|$ vertices,
          respectively.

                Proposition~\ref{p:local-prop} implies that
                $$\beta_{a,|u|}(\Delta_{[u]}) \neq 0 \mbox{ and }
                \beta_{b+1,|v|}(\Delta_{[v]}) \neq 0. $$

          Now we focus on $\Delta_{[v]}$, which contains the facet
          $F$. By Theorem~\ref{t:comp-facet}, $\Delta_{[F]}$ has a
          complement $\D_{[w]}$ in $\Delta_{[v]}$ with
          $\beta_{b,|w|}(\Delta_{[w]}) \neq 0.$

          We show that $\D_{[w]}$ and $\D_{[u]}$ are complements in
          $\D$.  From the fact that $\D_{[w]}$ and $\Delta_{[F]}$ are
          complements in $\Delta_{[v]}$ we see
          $$F \not \subseteq w \mbox{ and } \ w \cup F = v = v' \cup
          F.$$
          But $ v' \cap F = \emptyset$ so $w \supseteq v'$. We write
          $w=v' \cup w'$ where $w' \subset F$.

          It is clear that $w \cup u= V(\D)$. If
          $G$ is a facet of $\D$ and $$G \subset (u \cap w) = (u' \cup
          F) \cap (v' \cup w') \subset (u' \cap v') \cup F$$ then $(G \sm F) \subseteq u' \cap v'$. Now 
          there is a facet $H$ of $\D$ such that $(H \sm F) \subseteq
          (G \sm F) \mbox{ and } (H \sm F) \mbox{ is a facet of }
          \Gamma.$ But then, since $(H \sm F) \subseteq u' \cap v'$,
          we have that $(H \sm F)$ is a facet of both $\Gamma_{[u']}$
          and $\Gamma_{[v']}$, which contradicts the fact that these
          two are complements in $\Gamma$.

          So $\D_{[w]}$ and $\D_{[u]}$ have no facets in common, and
          are therefore complements in $\D$.
          
         \end{proof}

         \begin{remark} It must be noted that under the assumptions of
           Theorem~\ref{t:complement-nonzero}, not every $\m_u \in \LCM(I)$ with
             $\beta_{a,|u|}(\D_{[u]}) \neq 0$ has a complement
             $\m_v$ with $\beta_{b,|v|}(\D_{[v]}) \neq 0$.
 
             For example consider the ideal $I=(ab,bc,cd,de)$. The only
             complement of $m_u=bcd$ is $m_v=abde$. We have
             $$\beta_{3,5}(S/I) \neq 0 \mbox{ and } \beta_{2,3}(S/(bc,cd))\neq 0$$ but
             $$\beta_{1,4}(S/(ab,de))=0.$$
         \end{remark}

 \begin{theorem}[Subadditivity of Betti numbers of forests]
   Let $\D$ be a simplicial forest with facet ideal $I$ and suppose
   $i$ is at most the projective dimension of $S/I$, and $i=a+b$ where
   $a,b >0$. Then $t_a(I) +t_b(I) \geq t_{a+b}(I)$.
 \end{theorem}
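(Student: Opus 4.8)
The plan is to reduce the subadditivity statement to Theorem~\ref{t:complement-nonzero} by the standard polarization/localization machinery already described in the Setup. First I would recall that $t_{a+b}(I)$ is realized by some multigraded Betti number: there is a square-free monomial $\m_W$ with $\beta_{a+b,\m_W}(S/I)\neq 0$ and $\deg \m_W = |W| = t_{a+b}(I)$. Here I should handle the polarization step: a general monomial ideal polarizes to a square-free monomial ideal whose facet complex is still a simplicial forest (polarization of a forest is a forest), and polarization preserves all graded Betti numbers, so it suffices to prove the inequality for square-free $I$. Then, since $\beta_{a+b,\m_W}(\D)=\beta_{a+b,|W|}(\D_{[W]})$ and $\D_{[W]}$ has exactly $|W|$ vertices, I replace $\D$ by the induced subcollection $\D_{[W]}$, which is again a simplicial forest on $n:=|W|=t_{a+b}(I)$ vertices with $\beta_{a+b,n}(\D_{[W]})\neq 0$.

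Next I would dispose of the degenerate case where $\D_{[W]}$ has a single facet $F$. Then $I=(\m_F)$ is principal, its resolution is $0\to S(-|F|)\to S$, so the only nonzero Betti numbers are $\beta_{0,0}$ and $\beta_{1,|F|}$; since we need $a,b>0$ and $a+b\leq \operatorname{pd}(S/I)=1$, this case is vacuous (there is no valid decomposition $i=a+b$ with $a,b>0$ and $i\leq 1$). So I may assume $\D_{[W]}$ has more than one facet, which is exactly the hypothesis of Theorem~\ref{t:complement-nonzero}.

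Now apply Theorem~\ref{t:complement-nonzero} to $\D_{[W]}$ with the given decomposition $i=a+b$: there exist complements $\D_{[u]}$ and $\D_{[w]}$ in $\D_{[W]}$ with $\beta_{a,|u|}(\D_{[u]})\neq 0$ and $\beta_{b,|w|}(\D_{[w]})\neq 0$. By Lemma~\ref{l:complement}, being complements means $u\cup w = W$, hence $|u|+|w|\geq |W| = n = t_{a+b}(I)$. Finally I translate the multigraded nonvanishing back to the invariants $t_a, t_b$ of the original ideal: $\beta_{a,|u|}(\D_{[u]})\neq 0$ gives $\beta_{a,\m_u}(\D)\neq 0$ (an induced subcollection's Betti numbers sit inside those of $\D$), so $t_a(I)\geq |u|$; likewise $t_b(I)\geq |w|$. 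Adding these, $t_a(I)+t_b(I)\geq |u|+|w|\geq n = t_{a+b}(I)$, as desired.

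I expect the only real subtlety to be bookkeeping around the polarization step — making sure that polarizing a monomial ideal whose facet complex is a simplicial forest yields a square-free ideal whose facet complex is again a simplicial forest, and that one may then pass to the induced subcollection on the support of the top-degree monomial without losing the forest property (this uses that induced subcollections of forests are forests, which follows from Lemma~\ref{l:localization} applied to the appropriate prime, or is standard). Everything else is a direct concatenation of the lemmas and Theorem~\ref{t:complement-nonzero} already in hand, with the inequality $|u|+|w|\geq n$ being the numerical heart, supplied by the complement condition.
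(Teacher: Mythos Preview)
Your proposal is correct and follows essentially the same route as the paper: reduce to an induced subcollection $\D_{[W]}$ realizing $t_{a+b}(I)$ as a top-degree Betti number, dispose of the one-facet case, apply Theorem~\ref{t:complement-nonzero} to obtain complements, and use $|u|+|w|\geq |W|$ to conclude. The polarization discussion is unnecessary here, since the facet ideal of a simplicial forest is square-free by definition, so you may simply drop that step.
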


 \begin{proof} Assume without loss of generality that $\D$ has $n$
   vertices and $\beta_{i,n}(\D) \neq 0$. If $\D$ has only one facet,
   there is nothing to prove as $i=1$. If $\D$ has more than one
   facet, by Theorem~\ref{t:complement-nonzero} there are complements
   $\D_{[u]}$ and $\D_{[v]}$ in $\D$ with
   $\beta_{a,|u|}(\D_{[u]}) \neq 0$ and
   $\beta_{b,|v|}(\D_{[v]}) \neq 0$. It follows that
   $$t_a(I)+t_b(I) \geq |u|+|v|\geq n=t_{a+b}(I).$$
     \end{proof}


\end{document}